\newcommand{\dd}{\,\mathrm{d}}
\let\originalleft\left
\let\originalright\right
\renewcommand{\left}{\mathopen{}\mathclose\bgroup\originalleft}
\renewcommand{\right}{\aftergroup\egroup\originalright}
\newcommand{\EV}[1]{\mathbb{E}\left[#1\right]}
\newcommand{\VV}[1]{\mathbb{V}\left[#1\right]}
\newcommand{\cost}[1]{\mathcal{C}\left[#1\right]}
\newcommand{\rmse}[1]{e\left[#1\right]}
\newcommand{\QMC}[1]{\widehat Q^{MC}_{N_\ell,#1}}
\newcommand{\QML}[1]{\widehat Q^{ML}_{\{N_\ell\},#1}}
\newcommand{\lip}{C_{L}}
\newcommand{\domain}{\mathscr{D}}
\newcommand{\ME}{C_1}
\newcommand{\MV}{C_2}
\newtheorem{theorem}{Theorem}
\newtheorem{lemma}[theorem]{Lemma}
\newtheorem{example}[theorem]{Example}
\newtheorem{assumption}[theorem]{Assumption}
\title{A multilevel Monte Carlo method for computing failure probabilities}
\author{Daniel Elfverson\footnotemark[1] \and Fredrik Hellman\footnotemark[2] \and Axel Målqvist\footnotemark[3]\ }
\begin{document}

\renewcommand{\thefootnote}{\fnsymbol{footnote}}
\footnotetext[1]{Information Technology, Uppsala University, Box 337, SE-751 05, Uppsala, Sweden (\texttt{daniel.elfverson@it.uu.se}). Supported by the Göran Gustafsson Foundation.}
\footnotetext[2]{Information Technology, Uppsala University, Box 337, SE-751 05, Uppsala, Sweden (\texttt{fredrik.hellman@it.uu.se}). Supported by the Centre of Interdisciplinary Mathematics, Uppsala University.}
\footnotetext[3]{Department of Mathematical Sciences, Chalmers University of Technology and University of Gothenburg, SE-412 96 Göteborg, Sweden (\texttt{axel@chalmers.se}). Supported by the Swedish Research Council.}
\renewcommand{\thefootnote}{\arabic{footnote}}

\maketitle
% SIAM
%\newcommand{\slugmaster}{%
%\slugger{juq}{xxxx}{xx}{x}{x--x}}%slugger should be set to juq, siads, sifin, or siims
% End of SIAM

\begin{abstract}
  We propose and analyze a method for computing failure probabilities
  of systems modeled as numerical deterministic models (e.g., PDEs)
  with uncertain input data. A failure occurs when a functional of the
  solution to the model is below (or above) some critical value.  By
  combining recent results on quantile estimation and the multilevel
  Monte Carlo method we develop a method which reduces computational
  cost without loss of accuracy. We show how the computational cost of
  the method relates to error tolerance of the failure
  probability. For a wide and common class of problems, the
  computational cost is asymptotically proportional to solving a
  single accurate realization of the numerical model, i.e.,
  independent of the number of samples. Significant reductions in
  computational cost are also observed in numerical experiments.
\end{abstract}

% SIAM
% \begin{keywords}
%   multilevel Monte Carlo, failure probability, uncertainty
%   quantification
% \end{keywords}

% \begin{AMS}
% 65N15, 65C05, 60H35 
% \end{AMS}
% End of SIAM

\pagestyle{myheadings}

\thispagestyle{plain}

% SIAM
%\markboth{D. ELFVERSON, F. HELLMAN, AND A. MÅLQVIST}{A MLMC METHOD FOR FAILURE PROBABILITIES}
% End of SIAM
%%%%%%%%%%%%%%%%%%%%%%%%%%%%%%%%%%%%%%%%%%%%%%%%%%%%%%%%%%%%%%%%%%%%%%%%%%%%%%

\section{Introduction}
This paper is concerned with the computational problem of finding the
probability for failures of a modeled system. The model input is
subject to uncertainty with known distribution and a failure is the
event that a functional (quantity of interest, QoI) of the model
output is below (or above) some critical value. The goal of this paper
is to develop an efficient and accurate multilevel Monte Carlo (MLMC)
method to find the failure probability. We focus mainly on the case
when the model is a partial differential equation (PDE) and we use
terminology from the discipline of numerical methods for
PDEs. However, the methodology presented here is also applicable in a
more general setting.

A multilevel Monte Carlo method inherits the non-intrusive and
non-parametric characteristics from the standard Monte Carlo (MC)
method. This allows the method to be used for complex black-box
problems for which intrusive analysis is difficult or impossible. The
MLMC method uses a hierarchy of numerical approximations on different
accuracy levels. The levels in the hierarchy are typically directly
related to a grid size or timestep length. The key idea behind the
MLMC method is to use low accuracy solutions as control variates for
high accuracy solutions in order to construct an estimator with lower
variance. Savings in computational cost are achieved when the low
accuracy solutions are cheap and are sufficiently correlated with the
high accuracy solutions. MLMC was first introduced in \cite{Gil08} for
stochastic differential equations as a generalization of a two-level
variance reduction technique introduced in \cite{Keb05}. The method
has been applied to and analyzed for elliptic PDEs in
\cite{ClGiScTe11, ChScTe13, TeScGiUl13}. Further improvements of the
MLMC method, such as work on optimal hierarchies, non-uniform meshes
and more accurate error estimates can be found in \cite{LNST14,
  CHNST14}. In the present paper, we are not interested in the
expected value of the QoI, but instead a failure probability, which is
essentially a single point evaluation of the cumulative distribution
function (cdf). For extreme failure probabilities, related methods
include importance sampling \cite{Gly96}, importance splitting
\cite{GlHeShZa96}, and subset simulations \cite{AuBe01}. Works more
related to the present paper include non-parameteric density
estimation for PDE models in \cite{EsMaTa09}, and in particular
\cite{EEHM14}. In the latter, the selective refinement method for
quantiles was formulated and analyzed.

In this paper, we seek to compute the cdf at a given critical
value. The cdf at the critical value can be expressed as the
expectation value of a binomially distributed random variable $Q$ that
is equal to $1$ if the QoI is smaller than the critical value, and $0$
otherwise. The key idea behind selective refinement is that
realizations with QoI far from the critical value can be solved to a
lower accuracy than those close to the critical value, and still yield
the same value of $Q$. The random variable $Q$ lacks regularity with
respect to the uncertain input data, and hence we are in an
unfavorable situation for application of the MLMC method. However,
with the computational savings from the selective refinement it is
still possible to obtain an asymptotic result for the computational
cost where the cost for the full estimator is proportional to the cost
for a single realization to the highest accuracy.

The paper is structured as
follows. Section~\ref{sec:problem_formulation} presents the necessary
assumptions and the precise problem description. It is followed by
Section~\ref{sec:failure} where our particular failure probability
functional is defined and analyzed for the MLMC method. In
Section~\ref{sec:MLMC} and Section~\ref{sec:selective} we revisit the
multilevel Monte Carlo and selective refinement method adapted to this
problem and in Section \ref{sec:SMLMC} we show how to combine
multilevel Monte Carlo with the selective refinement to obtain optimal
computational cost. In Section~\ref{sec:heuristic} we give details on
how to implement the method in practice. The paper is concluded with
two numerical experiments in Section~\ref{sec:numerical}.

\section{Problem formulation}
\label{sec:problem_formulation}
We consider a model problem $\mathcal{M}$, e.g., a (non-)linear
differential operator with uncertain data. We let $u$ denote the
solution to the model
\begin{equation*}\label{eq:model}
  \mathcal{M}(\omega,u) = 0,
\end{equation*}
where the data $\omega$ is sampled from a space $\Omega$. In what
follows we assume that there exists a unique solution $u$ given any
$\omega\in\Omega$ almost surely. It follows that the solution $u$ to a
given model problem $\mathcal{M}$ is a random variable which can be
parameterized in $\omega$, i.e., $u=u(\omega)$.

The focus of this work is to compute failure probabilities, i.e., we
are not interested in some pointwise estimate of the expected value of
the solution, $\EV{u}$, but rather the probability that a given QoI
expressed as a functional, $X(u)$ of the solution $u$, is less
(or greater) than some given critical value $y$.
% This require that the functional removes all possible spatial and
% temporal dependence of $u$.
We let $F$ denote the cdf of the random variable $X = X(\omega)$. The
failure probability is then given by
\begin{equation}\label{eq:failure}
  p=F(y) = \Pr(X \le y).
\end{equation}
The following example illustrates how the problem description relates
to real world problems.
\begin{example}
  As an example, geological sequestration of carbon dioxide
  (\ce{CO_2}) is performed by injection of \ce{CO_2} in an underground
  reservoir. The fate of the \ce{CO_2} determines the success or
  failure of the storage system. The \ce{CO_2} propagation is often
  modeled as a PDE with random input data, such as a random permeability
  field. Typical QoIs include reservoir breakthrough time or pressure
  at a fault. The value $y$ corresponds to a critical value which the
  QoI may not exceed or go below. In the breakthrough time case, low
  values are considered failure. In the pressure case, high values are
  considered failure. In that case one should negate the QoI to transform the
  problem to the form of equation \eqref{eq:failure}.
\end{example}

The only regularity assumption on the model is the following Lipschitz
continuity assumption of the cdf, which is assumed to hold throughout the paper.
\begin{assumption}
  \label{ass:lip}
  For any $x,y\in\mathbb{R}$,
  \begin{equation}
      |F(x)-F(y)| \leq \lip |x-y|.
  \end{equation}
\end{assumption}

To compute the failure probability we consider the binomially
distributed variable $Q = \mathbbm{1}{(X\le y)}$ which takes the value
$1$ if $X\le y$ and $0$ otherwise. The cdf can be expressed as the
expected value of $Q$, i.e., $p=F(y)= \EV{Q}$. In practice we
construct an estimator $\widehat{Q}$ for $\EV{Q}$, based on
approximate sample values from $X$. As such, $\widehat Q$ often
suffers from numerical bias from the approximation in the underlying
sample. Our goal is to compute the estimator $\widehat Q$ to a given
root mean square error (RMSE) tolerance $\epsilon$, i.e., to compute
\begin{equation*}
  \label{eq:rmse}
  \rmse{\widehat Q}
  = \left(\EV{\left(\widehat Q- \EV{Q}\right)^2}\right)^{1/2} =\left(\VV{\widehat Q } + \left(\EV{\widehat Q- Q}\right)^2\right)^{1/2} \leq \epsilon
\end{equation*}
to a minimal computational cost. The equality above shows a standard
way of splitting the RMSE into a stochastic error and numerical bias
contribution.

The next section presents assumptions and results regarding the
numerical discretization of the particular failure probability
functional $Q$. 

\section{Approximate failure probability functional}
\label{sec:failure}
We will not consider a particular approximation technique for
computing $\widehat Q$, but instead make some abstract assumptions on
the underlying discretization. We introduce a hierarchy of refinement
levels $\ell = 0,1,\ldots$ and let $X'_\ell$ and $Q'_\ell =
\mathbbm{1}{(X'_\ell \le y)}$ be an approximate QoI of the model, and
approximate failure probability, respectively, on level $\ell$. One
possible and natural way to define the accuracy on level $\ell$ is by
assuming
\begin{equation}
\label{eq:uniformerror}
\left|X - X'_\ell\right| \le \gamma^\ell,
\end{equation}
for some $0 < \gamma < 1$. This means the error of all realizations on
level $\ell$ are uniformly bounded by $\gamma^\ell$. In a PDE setting,
typically an a priori error bound or a posteriori error estimate,
\begin{equation*}
\left|X(\omega) - X_h(\omega)\right| \le C(\omega) h^{s},
\end{equation*}
can be derived for some constants $C(\omega)$, $s$, and a
discretization parameter $h$. Then we can choose $X'_\ell = X_h$ with
$h = \left(C(\omega)^{-1}\gamma^\ell\right)^{1/s}$ to fulfill
\eqref{eq:uniformerror}.

For an accurate value of the failure probability functional the
condition in \eqref{eq:uniformerror} is unnecessarily strong. This
functional is very sensitive to perturbations of values close to $y$,
but insensitive to perturbations for values far from $y$. This
insensitivity can be exploited. We introduce a different approximation
$X_\ell$, and impose the following, relaxed, assumption on this
approximation of $X$, which allows for larger errors far from the
critical value $y$. This assumption is illustrated in
Figure~\ref{fig:numericalerror}.
\begin{assumption}
  \label{ass:qoierror}
  The numerical approximation $X_\ell$ of $X$ satisfies
  \begin{equation}\label{eq:numerror}
    \left|X - X_\ell\right| \le \gamma^\ell \quad \text{ or } \quad \left|X - X_\ell\right| < \left|X_\ell - y\right|
  \end{equation}
  for a fix $0 < \gamma < 1$.
\end{assumption}
\begin{figure}[h!]
  \centering
  \begin{tikzpicture}
    \begin{axis}[
      width=7cm,
      xmin=0,
      xmax=5,
      xlabel={$X_\ell$},
      xtick=\empty,
      extra x ticks={2.5},
      extra x tick style={xticklabel = $y$},
      ymin=0,
      ymax=3,
      ytick=\empty,
      extra y ticks={0.5},
      extra y tick style={yticklabel = $\gamma^\ell$},
      ylabel={$|X - X_\ell|$},
      axis lines=middle,
      axis line style=->,
      legend style={
        at={(1.1,0.5)},
        anchor={west},
        cells={anchor=west},
%        anch=outer north east,
      }
      ]
      \addplot[domain=2:3] {0.5};
      \addplot[domain=0:2,dashed] {-x+2.5};
      \addplot[domain=2:2.5,dotted] {-x+2.5};
      \addplot[domain=0:2,dotted] {0.5};
      \addplot[domain=3:5,dotted] {0.5};
      \addplot[domain=3:5,dashed] {x-2.5};
      \addplot[domain=2.5:3,dotted] {x-2.5};
      \addlegendentry{$|X-X_\ell| \le \gamma^\ell$};
      \addlegendentry{$|X-X_\ell| < |X_\ell - y|$};
    \end{axis}
  \end{tikzpicture}
  \caption{Illustration of condition \eqref{eq:numerror}. The numerical error is
    allowed to be larger than $\gamma^\ell$ far away from $y$.}
  \label{fig:numericalerror}
\end{figure}
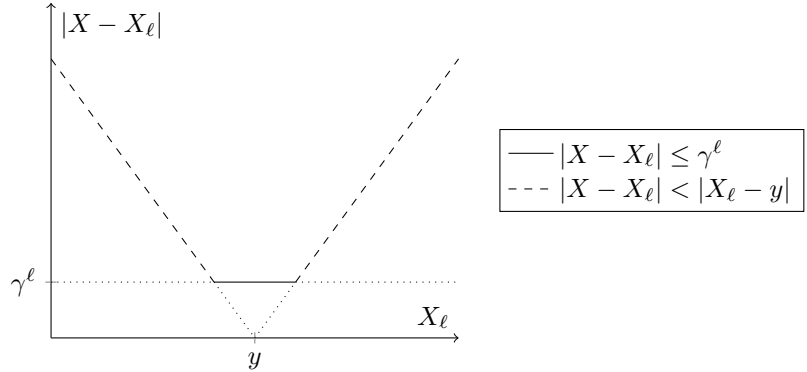

We define $Q_\ell = \mathbbm{1}{(X_\ell \le y)}$ analogously to
$Q'_\ell$. Let us compare the implications of the two conditions
\eqref{eq:uniformerror} and \eqref{eq:numerror} on the quality of the
two respective approximations. Denote by $X'_\ell$ and $Q'_\ell$
stochastic variables obeying the error bound \eqref{eq:uniformerror}
and its corresponding approximate failure functional, respectively,
and let $X_\ell$ obey \eqref{eq:numerror}. In a practical situation,
Assumption~\ref{ass:qoierror} is fulfilled by iterative refinements of
$X_\ell$ until condition \eqref{eq:numerror} is satisfied. It is
natural to use a similar procedure to achieve the stricter condition
\eqref{eq:uniformerror} for $X'_\ell$. We express this latter
assumption of using similar procedures for computing $X_\ell$ and
$X'_\ell$ as
\begin{equation}
  \label{eq:similarmethods}
  |X - X_\ell| \le \gamma^\ell \text{ implies } X'_\ell = X_\ell,
\end{equation}
i.e., for outcomes where $X_\ell$ is solved to accuracy
$\gamma^\ell$, $X'_\ell$ is equal to $X_\ell$. Under that assumption,
the following lemma shows that it is not less probable that $Q_\ell$
is correct than that $Q'_\ell$ is.
\begin{lemma}
  Let $X'_\ell$ and $X_\ell$ fulfill \eqref{eq:uniformerror} and
  \eqref{eq:numerror}, respectively, and assume
  \eqref{eq:similarmethods} holds. Then $\Pr(Q_\ell = Q) \ge
  \Pr(Q'_\ell = Q)$.
\end{lemma}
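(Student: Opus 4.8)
The plan is to show that the event $\{Q'_\ell = Q\}$ is contained (up to probability) in the event $\{Q_\ell = Q\}$, so that the inclusion of events immediately yields the probability inequality. The natural strategy is to partition the sample space $\Omega$ according to whether the outcome satisfies $|X - X_\ell| \le \gamma^\ell$ or not, since this dichotomy is exactly what distinguishes the two assumptions and is the hinge of the hypothesis \eqref{eq:similarmethods}.

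First I would handle the ``easy'' outcomes where $|X - X_\ell| \le \gamma^\ell$. On this set, assumption \eqref{eq:similarmethods} gives $X'_\ell = X_\ell$, hence $Q'_\ell = Q_\ell$; therefore $Q'_\ell = Q$ and $Q_\ell = Q$ are the same event there, and neither approximation has an advantage. The work is on the complementary set, where $|X - X_\ell| > \gamma^\ell$. Here assumption \eqref{eq:numerror} forces the second alternative, $|X - X_\ell| < |X_\ell - y|$. The key claim to establish is that on this set $Q_\ell = Q$ holds \emph{always}, i.e., $X_\ell$ always gets the indicator right when it is far from $y$. The argument is that $|X - X_\ell| < |X_\ell - y|$ means $X$ cannot lie on the opposite side of $y$ from $X_\ell$: if $X_\ell \le y$ then $X \le X_\ell + |X - X_\ell| < X_\ell + |X_\ell - y| = y$ (using $X_\ell \le y$), and symmetrically if $X_\ell > y$; in each case $X$ and $X_\ell$ are on the same side of $y$, so $Q_\ell = Q$.

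Combining the two cases, on all of $\Omega$ we have that $Q'_\ell = Q$ implies $Q_\ell = Q$: on the easy set the two events coincide, and on the far set $Q_\ell = Q$ holds unconditionally. Hence $\{Q'_\ell = Q\} \subseteq \{Q_\ell = Q\}$ as events, and taking probabilities gives $\Pr(Q_\ell = Q) \ge \Pr(Q'_\ell = Q)$, which is the claim.

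The main obstacle, and the step deserving the most care, is the strict-versus-nonstrict inequality bookkeeping at the boundary $X = y$ or $X_\ell = y$, because $Q = \mathbbm{1}(X \le y)$ uses a closed inequality while the error bound $|X - X_\ell| < |X_\ell - y|$ is strict. One must check that the same-side conclusion $Q_\ell = Q$ is genuinely forced in the borderline configurations (for instance when $X_\ell = y$, the bound gives $|X - X_\ell| < 0$, which is vacuous/impossible, so such outcomes fall under the first alternative instead); tracking these endpoint cases is the only place the argument could slip, and it is worth verifying the indicator is correctly evaluated in each.
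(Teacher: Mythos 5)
Your proof is correct and follows essentially the same route as the paper's: both partition $\Omega$ by whether $|X - X_\ell| \le \gamma^\ell$, use \eqref{eq:similarmethods} to identify $Q'_\ell$ with $Q_\ell$ on that set, and observe that on the complement the second alternative of \eqref{eq:numerror} forces $Q_\ell = Q$. Your explicit same-side-of-$y$ argument and the check that $X_\ell = y$ cannot occur on the complement are details the paper leaves implicit, but they do not change the structure of the argument.
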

\begin{proof}
  We split $\Omega$ into the events $A = \{\omega \in \Omega : |X -
  X_\ell| \le \gamma^\ell\}$ and its complement $\Omega \setminus
  A$. For $\omega \in A$, using \eqref{eq:similarmethods}, we conclude
  that $Q'_\ell = Q_\ell$, hence
  \begin{equation*}
    \Pr(Q_\ell = Q \;|\; A) = \Pr(Q'_\ell = Q \;|\; A).
  \end{equation*}
  For $\omega \notin A$, we have $|X - X_\ell| > \gamma^\ell$, and
  from \eqref{eq:numerror} that $|X - X_\ell| < |X_\ell - y|$, i.e.,
  $Q_\ell = Q$ and hence
  \begin{equation*}
    \Pr(Q_\ell = Q \;|\; \Omega \setminus A) = 1.
  \end{equation*}
  Since $\Pr(Q'_\ell = Q \;|\; \Omega \setminus A) \le 1$, we get
  $\Pr(Q_\ell = Q) \ge \Pr(Q'_\ell = Q)$.
\end{proof}

Under Assumption~\ref{ass:qoierror} we can prove the following lemma
on the accuracy of the failure probability function $Q_\ell$.
\begin{lemma}\label{lem:assEVW}
  Under Assumption~\ref{ass:lip} and \ref{ass:qoierror}, the statements
  \begin{description}
  \item[M1] $\left|\EV{Q_\ell-Q}\right|\leq \ME\gamma^\ell$,
  \item[M2] $\VV{Q_\ell-Q_{\ell-1}} \leq
    \MV\gamma^{\ell}$ for $\ell \geq 1$,
  \end{description}
  are satisfied where $\ME$ and $\MV$ do not depend on $\ell$.
\end{lemma}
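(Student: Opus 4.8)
The plan is to reduce both statements to a single probabilistic estimate: that any disagreement between $Q_\ell$ and $Q$ forces $X$ into a narrow band of width $2\gamma^\ell$ about the critical value $y$, whose probability is controlled by the Lipschitz Assumption~\ref{ass:lip}. Everything then follows from the fact that $Q_\ell-Q$ and $Q_\ell-Q_{\ell-1}$ are $\{-1,0,1\}$-valued, so their absolute values and squares are indicators of disagreement events.

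First I would record the consequence of Assumption~\ref{ass:qoierror} already exploited in the preceding lemma: whenever the second alternative $|X-X_\ell|<|X_\ell-y|$ holds, one has $Q_\ell=Q$. Indeed, if $X_\ell\le y$ then $X-X_\ell<|X_\ell-y|=y-X_\ell$ gives $X<y$, and if $X_\ell>y$ the symmetric computation gives $X>y$; either way the indicators agree. Hence $Q_\ell\ne Q$ can occur only on the event where the first alternative $|X-X_\ell|\le\gamma^\ell$ holds. On that event, if moreover $Q_\ell\ne Q$, then $X$ and $X_\ell$ lie on opposite sides of $y$ with $|X-X_\ell|\le\gamma^\ell$, so $|X-y|\le|X-X_\ell|\le\gamma^\ell$. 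This establishes the inclusion $\{Q_\ell\ne Q\}\subseteq\{|X-y|\le\gamma^\ell\}$, and since $F$ is continuous under Assumption~\ref{ass:lip},
\[
\Pr(Q_\ell\ne Q)\le\Pr(|X-y|\le\gamma^\ell)=F(y+\gamma^\ell)-F(y-\gamma^\ell)\le 2\lip\gamma^\ell.
\]

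For \textbf{M1}, since $|Q_\ell-Q|=\mathbbm{1}(Q_\ell\ne Q)$, I would bound $|\EV{Q_\ell-Q}|\le\EV{|Q_\ell-Q|}=\Pr(Q_\ell\ne Q)\le 2\lip\gamma^\ell$, giving $\ME=2\lip$. For \textbf{M2}, I would bound the variance by the second moment, $\VV{Q_\ell-Q_{\ell-1}}\le\EV{(Q_\ell-Q_{\ell-1})^2}=\Pr(Q_\ell\ne Q_{\ell-1})$, again using that the difference of indicators is $\{-1,0,1\}$-valued. A union bound through $Q$ gives $\{Q_\ell\ne Q_{\ell-1}\}\subseteq\{Q_\ell\ne Q\}\cup\{Q_{\ell-1}\ne Q\}$, so applying the band estimate at both levels yields
\[
\Pr(Q_\ell\ne Q_{\ell-1})\le 2\lip\gamma^\ell+2\lip\gamma^{\ell-1}=2\lip(1+\gamma^{-1})\gamma^\ell,
\]
which is \textbf{M2} with $\MV=2\lip(1+\gamma^{-1})$, independent of $\ell$.

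The main obstacle is the first step: pinning down precisely why the indicators can disagree only inside the band $\{|X-y|\le\gamma^\ell\}$, which hinges on the straddling argument together with the observation that the second alternative in Assumption~\ref{ass:qoierror} already forces $Q_\ell=Q$. Once that inclusion is secured, the passage from disagreement probabilities to the expectation and variance bounds via the $\{-1,0,1\}$ structure, and the Lipschitz band estimate, are routine.
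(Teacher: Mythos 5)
Your proof is correct and follows essentially the same route as the paper: both reduce \textbf{M1} to the probability that $X$ lies in an $O(\gamma^\ell)$-band around $y$ (controlled by Assumption~\ref{ass:lip}), and both obtain \textbf{M2} from \textbf{M1}-type bounds at levels $\ell$ and $\ell-1$. Your execution is marginally cleaner — the inclusion $\{Q_\ell\ne Q\}\subseteq\{|X-y|\le\gamma^\ell\}$ gives a sharper constant than the paper's event $\{|X_\ell-y|\le\gamma^\ell\}$, and the identity $(Q_\ell-Q_{\ell-1})^2=\mathbbm{1}(Q_\ell\ne Q_{\ell-1})$ with a union bound replaces the paper's slightly more opaque algebraic expansion — but the underlying argument is the same.
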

\begin{proof}
  \label{proof:assEVW1}
  We split $\Omega$ into the events $B = \{\omega \in \Omega :
  \gamma^\ell \ge |X_\ell - y|\}$ and its complement $\Omega \setminus
  B$. In $\Omega \setminus B$, we have $Q_\ell = Q$, since $|X -
  X_\ell| < |X_\ell - y|$ from \eqref{eq:numerror}. Also, we note that
  the event $B$ implies $|X - X_\ell| \le \gamma^\ell$, hence $|X - y|
  \le 2\gamma^\ell$. Then,
  \begin{equation*}
    \begin{aligned}
    |\EV{Q_\ell - Q}| & = \left|\int_{B} Q_\ell(\omega) - Q(\omega)\,\mathrm{d}P(\omega)\right|
     \le \int_{B} 1\,\mathrm{d}P(\omega) \\
    & \le \Pr(|X - y| \le 2\gamma^\ell)
     = F(y - 2\gamma^\ell) - F(y + 2\gamma^\ell) \\
    & \le 4C_L\gamma^\ell, \\
    \end{aligned}
  \end{equation*}
  which proves
  \textbf{M1}. \textbf{M2} follows directly from \textbf{M1}, since
  \begin{equation*}
    \begin{aligned}
      \VV{Q_\ell-Q_{\ell-1}} & = \EV{(Q_\ell-Q_{\ell-1})^2} - \EV{Q_\ell-Q_{\ell-1}}^2 \\
      & \le \EV{Q_\ell-2Q_\ell Q_{\ell-1} + Q_{\ell-1}} \\
      & \le \left|\EV{Q_\ell-Q}\right| + \left|2\EV{Q_\ell Q_{\ell-1}-Q}\right| + \left|\EV{Q_{\ell-1}-Q}\right| \\
      & \le 2\left|\EV{Q_\ell-Q}\right| + 2\left|\EV{Q_{\ell-1}-Q}\right| \\
      & \le C_2\gamma^\ell, \\
    \end{aligned}
  \end{equation*}
  where $(Q_\ell)^2 = Q_\ell$ was used.
\end{proof}

Interesting to note with this particular failure probability
functional is that the convergence rate in \textbf{M2} cannot be
improved if the rate in \textbf{M1} is already sharp, as the following
lemma shows.
\begin{lemma}
  Let $0 < \gamma < 1$ be fixed. If there is a $0<c\leq 1$ such that
  the failure probability functional satisfies
  \begin{equation*}
    c\gamma^\ell \leq|\EV{Q_\ell - Q}| \leq \ME \gamma^\ell
  \end{equation*}
  for all $\ell=0,\ldots$, then
  \begin{equation*}
    \VV{Q_\ell - Q_{\ell-1}} \leq \MV \gamma^{\beta\ell},
  \end{equation*}
   where $\beta = 1$ is sharp in the sense that the relation will be violated for sufficiently large $\ell$, if $\beta>1$.
\end{lemma}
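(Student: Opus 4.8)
The relation with $\beta=1$ is precisely statement \textbf{M2} of Lemma~\ref{lem:assEVW} (which is unconditional), so the only genuinely new content is the sharpness claim, and I would prove it by contradiction. Write $a_\ell = \EV{Q_\ell - Q}$, so the hypothesis reads $c\gamma^\ell \le |a_\ell| \le \ME\gamma^\ell$; in particular $a_\ell \to 0$ since $0<\gamma<1$. The first step is to record an elementary lower bound on the variance. Because $Q_\ell - Q_{\ell-1}$ takes values in $\{-1,0,1\}$ we have $\EV{(Q_\ell-Q_{\ell-1})^2} = \Pr(Q_\ell \ne Q_{\ell-1}) \ge |\EV{Q_\ell-Q_{\ell-1}}|$, hence
$$\VV{Q_\ell - Q_{\ell-1}} = \Pr(Q_\ell \ne Q_{\ell-1}) - (a_\ell - a_{\ell-1})^2 \ge d_\ell - d_\ell^2, \qquad d_\ell := |a_\ell - a_{\ell-1}|.$$
Since $d_\ell \le |a_\ell| + |a_{\ell-1}| \to 0$, there is an $\ell_1$ with $d_\ell \le 1/2$ for $\ell \ge \ell_1$, and therefore $\VV{Q_\ell - Q_{\ell-1}} \ge \tfrac12 d_\ell$ for such $\ell$.

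Next, I would suppose for contradiction that for some $\beta>1$ and some constant $\MV$ the bound $\VV{Q_\ell - Q_{\ell-1}} \le \MV\gamma^{\beta\ell}$ holds for all $\ell \ge m$, where $m \ge \ell_1$. Then $d_k \le 2\MV\gamma^{\beta k}$ for every $k \ge m$. The crucial move is to aggregate these increments across levels rather than work with a single one: because $a_\ell \to 0$, the tail telescopes, $a_m = \sum_{k>m}(a_{k-1}-a_k)$, whence
$$|a_m| \le \sum_{k>m} d_k \le \sum_{k>m} 2\MV\gamma^{\beta k} = \frac{2\MV\gamma^\beta}{1-\gamma^\beta}\,\gamma^{\beta m}.$$
Combining this with the hypothesized lower bound $|a_m| \ge c\gamma^m$ gives $c \le \frac{2\MV\gamma^\beta}{1-\gamma^\beta}\,\gamma^{(\beta-1)m}$, and since $\beta>1$ and $0<\gamma<1$ the right-hand side tends to $0$ as $m\to\infty$. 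This is impossible for $m$ large, so the bound with $\beta>1$ cannot persist for all large $\ell$; equivalently it is violated at arbitrarily large $\ell$, which is the asserted sharpness of $\beta=1$.

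The step I expect to be the main obstacle is obtaining a usable lower bound on the variance. The naive approach bounds a single increment, e.g. via $\Pr(Q_\ell \ne Q_{\ell-1}) \ge |a_{\ell-1}| - |a_\ell| \ge c\gamma^{\ell-1} - \ME\gamma^\ell = \gamma^{\ell-1}(c - \ME\gamma)$, which is only informative when $\gamma < c/\ME$, an artificial smallness restriction one would like to avoid (and indeed $a_\ell$ could a priori be nearly constant over one level, making $d_\ell$ tiny). Telescoping the entire tail of the bias sidesteps this: the geometric tail $\sum_{k>m}\gamma^{\beta k}$ is of order $\gamma^{\beta m}$, which is dominated by $c\gamma^m$ precisely because $\beta>1$, so the contradiction is driven by the exponent gap alone and not by any relation between $c$, $\ME$, and $\gamma$.
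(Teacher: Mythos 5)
Your proof is correct and follows essentially the same route as the paper's: assume $\VV{Q_\ell-Q_{\ell-1}}\le \MV\gamma^{\beta\ell}$ with $\beta>1$, telescope the bias differences across levels, bound each increment $|\EV{Q_{j}-Q_{j-1}}|$ by a constant times the variance (using that $Q_j-Q_{j-1}$ takes values in $\{-1,0,1\}$), and contradict the lower bound $c\gamma^m$ by comparing with the geometric tail of order $\gamma^{\beta m}$. The only cosmetic differences are that you telescope the full infinite tail down to $Q$ (using $a_\ell\to 0$) where the paper telescopes between two finite levels $k<\ell$, and that your bound $\VV{Y_\ell}\ge d_\ell(1-d_\ell)\ge d_\ell/2$ absorbs the $(\EV{Y_\ell})^2$ term that the paper carries along as $\mathcal{O}(\gamma^{2k})$.
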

\begin{proof}
  Assume that $\VV{Q_\ell - Q_{\ell-1}}\leq C\gamma^{\beta\ell}$ for
  for some constant $C$ and $\beta>1$. For two levels $\ell$ and
  $k$, we have that
  \begin{equation*}
    \begin{aligned}
      |\EV{Q_\ell - Q_k}| &\geq \left| |\EV{Q_\ell-Q}|-|\EV{Q_k-Q}|\right|  \geq \left(c-\gamma^{\ell-k}\right)\gamma^k.
    \end{aligned}
  \end{equation*}
  Choosing $\ell$ and $k$ such that $\ell > k$ and
  $c-\gamma^{\ell-k}>0$ yields
   \begin{equation*}
    \begin{aligned}
      \left(c-\gamma^{\ell-k}\right)\gamma^k &\leq |\EV{Q_\ell - Q_k}| \leq \sum_{j=k}^{\ell-1}|\EV{Q_{j+1} - Q_j}| \leq \sum_{j=k}^{\ell-1}\EV{(Q_{j+1} - Q_j)^2} \\
      & = \sum_{j=k}^{\ell-1}\left(\VV{Q_{j+1} - Q_j} + \left(\EV{Q_{j+1} - Q_j}\right)^2\right) \\
      & \leq  \sum_{j=k}^{\ell-1}\left(C\gamma^{\beta j} + \mathcal{O}(\gamma^{2j})\right) \le \widetilde C\gamma^{\beta k} + \mathcal{O}(\gamma^{2k}).
    \end{aligned}
 \end{equation*}
 For $\ell,k \to \infty$ we have a contradiction and hence $\beta\leq 1$,
 which proves that the bound can not be improved.
\end{proof}

\section{Multilevel Monte Carlo method}\label{sec:MLMC}
In this section, we present the multilevel Monte Carlo method in a
general context. Because of the low convergence rate of the variance
in \textbf{M2}, the MLMC method does not perform optimally for
the failure probability functional.  The results presented here will
be combined with the results from Section~\ref{sec:selective} to
derive a new method to compute failure probabilities efficiently in
Section~\ref{sec:SMLMC}.

The (standard) MC estimator at refinement level $\ell$ of $\EV{Q}$
using a sample $\{\omega^{i}_\ell\}_{i=1}^{N_\ell}$, reads
\begin{equation*}\label{eq:MC}
  \QMC{\ell} = \frac{1}{N_\ell}\sum_{i=1}^{N_\ell}Q_\ell(\omega^{i}_\ell).
\end{equation*}
Note that the subscripts $N_\ell$ and $\ell$ control the statistical
error and numerical bias, respectively. The expected value and
variance of the estimator $\QMC{\ell}$ are
$\EV{\QMC{\ell}}=\EV{Q_\ell}$ and $\VV{\QMC{\ell}} =
N_\ell^{-1}\VV{Q_\ell}$, respectively. Referring to the goal of the
paper, we want the MSE (square of the RMSE) to satisfy
\begin{equation*}
  \label{eq:MCerror}
  \rmse{\QMC{\ell}}^2 = N_\ell^{-1}\VV{Q_\ell} + \left(\EV{Q_\ell-Q}\right)^2\leq \epsilon^2/2 + \epsilon^2/2=\epsilon^2,
\end{equation*}
i.e., both the statistical error and the numerical error should be
less than $\epsilon^2/2$. The MLMC method is a variance reduction
technique for the MC method. The MLMC estimator $\QML{L}$ at
refinement level $L$ is expressed as a telescoping sum of $L$ MC
estimator correctors:
\begin{equation*}
  \QML{L} = \sum_{\ell=0}^L\frac{1}{N_\ell}\sum_{i=1}^{N_\ell}\left(Q_\ell(\omega^{i}_\ell) - Q_{\ell-1}(\omega^{i}_\ell)\right),
\end{equation*}
where $Q_{-1}=0$. There is one corrector for every
refinement level $\ell = 0,\ldots,L$, each with a specific MC
estimator sample size $N_\ell$. The expected value and variance of the
MLMC estimator are
\begin{equation}\label{eq:MLCMexpvar}
  \begin{aligned}
    \EV{\QML{L}} &= \sum_{\ell=0}^L\EV{Q_\ell-Q_{\ell-1}} = \EV{Q_L}\quad \text{and} \\
    \VV{\QML{L}} &= \sum_{\ell=0}^LN_\ell^{-1}\VV{Q_\ell-Q_{\ell-1}},
  \end{aligned}
\end{equation}
respectively. Using \eqref{eq:MLCMexpvar} the MSE for the MLMC
estimator can be expressed as
\begin{equation*}
  \begin{aligned}
    \rmse{\QML{L}}^2 &= \sum_{\ell=0}^LN_\ell^{-1} \VV{Q_\ell-Q_{\ell-1}}+\left(\EV{Q_L-Q}\right)^2,
  \end{aligned}
\end{equation*}
 and can be computed at expected cost
\begin{equation*}
  \cost{\QML{L}} = \sum_{\ell=0}^L N_\ell c_\ell,
\end{equation*}
where $c_\ell=\cost{Q_\ell}+\cost{Q_{\ell-1}}$. Here, by
$\cost{\cdot}$ we denote the expected computational cost to compute a
certain quantity. Given that the variance of the MLMC estimator is
$\epsilon^2/2$ the expected cost is minimized by choosing
\begin{equation}\label{eq:MLMCoptimalNl}
  N_\ell = 2\epsilon^{-2}\sqrt{\VV{Q_\ell-Q_{\ell-1}}/ c_\ell}\sum_{k=0}^L\sqrt{\VV{Q_{k}-Q_{k-1}}c_k}
\end{equation}
(see Appendix \ref{optimalNl}), and hence the total expected cost is
\begin{equation}
  \label{eq:MLMCtotalcost}
  \cost{\QML{L}} = 2\epsilon^{-2}\left(\sum_{\ell=0}^L \sqrt{\VV{Q_\ell-Q_{\ell-1}}c_\ell}\right)^2.
\end{equation}
If the product $\VV{Q_\ell-Q_{\ell-1}}c_\ell$ increases (or decreases)
with $\ell$ then dominating term in \eqref{eq:MLMCtotalcost} will be
$\ell=L$ (or $\ell = 0$). The values $N_\ell$ can be estimated on the
fly in the MLMC algorithm using \eqref{eq:MLMCoptimalNl} while the
cost $c_\ell$ can be estimated using an a priori model.  The
computational complexity to obtain a RMSE less than $\epsilon$ of the
MLMC estimator for the failure probability functional is given by the
theorem below. In the following, the notation $a \lesssim b$ stands
for $a \le Cb$ with some constant $C$ independent of $\epsilon$ and
$\ell$.
\begin{theorem}
  \label{thm:MLMC}
  Let Assumption~\ref{ass:lip} and \ref{ass:qoierror} hold (so that
  Lemma~\ref{lem:assEVW} holds) and $\cost{Q_\ell} \lesssim \gamma^{-r
    \ell}$. Then there exists a constant $L$ and a sequence
  $\{N_\ell\}$ such that the RMSE is less than $\epsilon$, and the
  expected cost of the MLMC estimator is
  \begin{equation}
    \cost{\QML{L}} \lesssim
    \begin{cases}
      \epsilon^{-2} & \quad r < 1\\
      \epsilon^{-2}(\log\epsilon^{-1})^2 & \quad r = 1 \\
      \epsilon^{-1-r} & \quad r > 1.  \\
    \end{cases}
  \end{equation}
\end{theorem}
\begin{proof}
  For a proof see, e.g., \cite{ClGiScTe11,Gil08}.
\end{proof}

The most straight-forward procedure to fulfill
Assumption~\ref{ass:qoierror} in practice is to refine all samples on
level $\ell$ uniformly to an error tolerance $\gamma^\ell$, i.e., to
compute $X'_\ell$ introduced in Section~\ref{sec:failure}, for which
$\left|X - X'_\ell\right| \leq \gamma^{\ell}$. Typical numerical
schemes for computing $X'_\ell$ include finite element, finite volume,
or finite difference schemes. Then the expected cost
$\cost{Q'_\ell}$ typically fulfill
\begin{equation}
\label{eq:costmodel}
\cost{Q'_\ell} = \gamma^{-q\ell},
\end{equation}
where $q$ depends on the physical dimension of the computational
domain, the convergence rate of the solution method, and computational
complexity for assembling and solving the linear system. Note that one
unit of work is normalized according to equation (\ref{eq:costmodel}).
Using Theorem~\ref{thm:MLMC}, with $Q'_\ell$ instead of $Q_\ell$
(which is possible, since $Q'_\ell$ trivially fulfills
Assumption~\ref{ass:qoierror}) we obtain a RMSE of the expected cost
less than $\epsilon^{-1-q} = \epsilon^{-1}\cost{Q'_\ell}$
for the case $q > 1$.

In the next section we describe how the selective refinement
algorithm computes $X_\ell$ (hence $Q_\ell$) that fulfills
Assumption~\ref{ass:qoierror} to a lower cost than its fully refined
equivalent $X'_\ell$. The theorem above can then be applied with $r =
q - 1$ instead of $r = q$.

\section{Selective refinement algorithm}
\label{sec:selective}
In this section we modify the selective refinement algorithm proposed
in \cite{EEHM14} for computing failure probabilities (instead of
quantiles) and for quantifying the error using the RMSE. The selective
refinement algorithm computes $X_\ell$ so that
\begin{equation*}\label{eq:numerror2}
  \left|X - X_\ell\right| \le \gamma^{\ell} \quad \text{ or } \quad \left|X - X_\ell\right| < \left|X_\ell - y\right|
\end{equation*}
in Assumption~\ref{ass:qoierror} is fulfilled without requiring the
stronger (full refinement) condition
\begin{equation*}
  \label{eq:uniformerror2}
    \left|X - X_\ell\right| \le \gamma^\ell.
\end{equation*}
In contrast to the selective refinement algorithm in \cite{EEHM14},
Assumption~\ref{ass:qoierror} can be fulfilled by iterative refinement
of realizations over all realizations independently. This allows for
an efficient totally parallell implementation. We are particularly
interested in quantifying the expected cost required by the selective
refinement algorithm, and showing that the $X_\ell$ resulting from the
algorithm fulfills Assumption~\ref{ass:qoierror}.

Algorithm~\ref{alg:selective} exploits the fact that $Q_\ell=Q$ for
realizations satisfying $|X - X_\ell| < |X_\ell - y|$. That is, even
if the error of $X_\ell$ is greater than $\gamma^\ell$, it might be
sufficiently accurate to yield the correct value of $Q_\ell$.  The
algorithm works on a per-realization basis, starting with an error
tolerance $1$. The realization is refined iteratively until
Assumption~\ref{ass:qoierror} is fulfilled. The advantage is that many
samples can be solved only with low accuracy and hence the average
cost per $Q_\ell$ is
reduced. Lemma~\ref{lem:selective_satisfies_assumption} shows that
$X_\ell$ computed using Algorithm~\ref{alg:selective} satisfies
Assumption~\ref{ass:qoierror}.
\begin{algorithm}[htb]\caption{Selective refinement algorithm}
\label{alg:selective}
\begin{algorithmic}[1]
  \State Input arguments: level $\ell$, realization $i$, critical value $y$, and tolerance factor $\gamma$
  \State Compute $X_\ell(\omega^i_\ell)$ to tolerance $1$
  \State Let $j = 0$
  \While{$j \le \ell$ and $\gamma^j > |X_\ell(\omega^i_\ell) - y|$}
  \State Recompute $X_\ell(\omega^i_\ell)$ to tolerance $\gamma^j$
  \State Let $j = j + 1$
  \EndWhile
  \State Final $X_\ell(\omega^i_\ell)$ is the result
\end{algorithmic}
\end{algorithm}
\begin{lemma}\label{lem:selective_satisfies_assumption}
  Approximations $X_\ell$ computed using Algorithm~\ref{alg:selective}
  satisfy Assumption~\ref{ass:qoierror}.
\end{lemma}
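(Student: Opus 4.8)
The plan is to argue directly from the control flow of Algorithm~\ref{alg:selective}, showing that however the \textbf{while} loop terminates, one of the two clauses of \eqref{eq:numerror} must hold for the returned $X_\ell$. First I would note that the loop terminates: the counter $j$ is incremented by one on every pass and the guard contains the clause $j \le \ell$, so at most $\ell+1$ refinements occur. It therefore suffices to inspect the state of $X_\ell$ at the instant the guard first evaluates to false, and to read off which disjunct of the negated guard caused the exit.

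The negated guard is the disjunction $j > \ell$ or $\gamma^j \le |X_\ell - y|$, and I would split into these two cases. In the first case the loop exited because $j$ was incremented past $\ell$; this can only occur immediately after an iteration entered with $j=\ell$, in which $X_\ell$ was recomputed to the finest tolerance reached, namely $\gamma^\ell$. Hence $|X - X_\ell| \le \gamma^\ell$, which is exactly the first clause of Assumption~\ref{ass:qoierror}. In the second case the loop exited because the current iterate lies at distance at least $\gamma^j$ from the critical value $y$, with $j \le \ell$. Pairing this with the tolerance to which $X_\ell$ was most recently recomputed yields $|X - X_\ell| \le \gamma^j \le |X_\ell - y|$, from which the second clause $|X - X_\ell| < |X_\ell - y|$ of Assumption~\ref{ass:qoierror} follows.

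The step I expect to require the most care is the bookkeeping in the second case: one must track precisely which tolerance the current $X_\ell$ satisfies at the moment the guard is tested, since the loop index $j$ and the accuracy actually achieved by the last recomputation can be offset by one pass of the loop. Getting this alignment right is exactly what guarantees that the achieved error $\gamma^j$ is compared against the same power of $\gamma$ that bounds $|X_\ell - y|$ from below, and not against a coarser power $\gamma^{j-1}$; a misalignment here would leave a genuine gap between the two bounds rather than the chain $|X - X_\ell| \le \gamma^j \le |X_\ell - y|$. A secondary, purely technical point is promoting the non-strict bound to the strict inequality demanded by \eqref{eq:numerror}: in the borderline situation $|X - X_\ell| = |X_\ell - y|$ both inequalities in the chain must collapse, forcing $|X - X_\ell| = \gamma^j$ with $j \le \ell$, so that when $j = \ell$ the first clause $|X - X_\ell| \le \gamma^\ell$ applies instead, and otherwise the strictness is supplied by the error of $X_\ell$ being strictly below the recomputation tolerance.
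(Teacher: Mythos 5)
Your argument is essentially the paper's own proof written out in full: the paper simply asserts the loop invariant that at each test $|X-X_\ell|\le\gamma^j$ and then notes that the negated guard ($j>\ell$ or $\gamma^j\le|X_\ell(\omega^i_\ell)-y|$) delivers one of the two clauses of \eqref{eq:numerror}, which is exactly your case split. The one-pass offset you single out as the delicate point is genuinely present in the pseudocode as printed (at the test with counter $j\ge 1$ the iterate carries tolerance $\gamma^{j-1}$, since the recomputation to tolerance $\gamma^j$ precedes the increment), but the paper's two-sentence proof makes the same silent identification and likewise ignores the $\le$ versus $<$ borderline, so your treatment is, if anything, the more careful of the two.
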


\begin{proof}
  At each iteration in the while-loop of
  Algorithm~\ref{alg:selective}, $\gamma^j$ is the error tolerance of
  $X_\ell(\omega^i_\ell)$, i.e., $|X(\omega^i_\ell) -
  X_\ell(\omega^i_\ell)| \le \gamma^j$. The stopping criterion hence
  implies Assumption~\ref{ass:qoierror} for $X_\ell(\omega^i_\ell)$.
\end{proof}

The expected cost for computing $Q_\ell$ using
Algorithm~\ref{alg:selective} is given by the following lemma.
\begin{lemma}\label{lem:cost_selective}
  The expected cost to compute the failure probability functional
  using Algorithm~\ref{alg:selective} can be bounded as
  \begin{equation*}\label{eq:costselective}
    \cost{Q_\ell} \lesssim \sum_{j=0}^\ell\gamma^{(1-q)j}.
  \end{equation*}
\end{lemma}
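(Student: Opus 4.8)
The plan is to reduce the expected cost to a sum over refinement tolerances, each weighted by the probability that a realization is refined that far, and then to control those probabilities with the Lipschitz assumption on $F$. By the cost model \eqref{eq:costmodel}, recomputing a single realization to tolerance $\gamma^j$ (the recompute step, line~5, of Algorithm~\ref{alg:selective}) costs $\gamma^{-qj}$. Since the loop variable increases by one on every pass, a realization that stops after reaching level $J$ (the largest $j$ for which line~5 executes) has incurred a total cost $\sum_{j=0}^J \gamma^{-qj}$, and $J \le \ell$ always. Writing $J = J(\omega)$ for this random stopping level, I would start from
\begin{equation*}
  \cost{Q_\ell} = \EV{\sum_{j=0}^J \gamma^{-qj}} = \sum_{j=0}^\ell \gamma^{-qj}\,\Pr(J \ge j),
\end{equation*}
where I use that $\{J \ge j\}$ is exactly the event that line~5 runs with loop variable $j$ (monotone, since reaching iteration $j$ requires the while-condition to have held at every earlier iteration), and I exchange the sum and the expectation.

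The key step is to bound $\Pr(J \ge j)$ by translating the algorithm's stopping test, which is phrased in terms of the \emph{approximate} distance $|X_\ell - y|$, into a statement about the \emph{true} distance $|X - y|$. When the loop reaches variable $j \ge 1$, the current iterate $X_\ell$ has tolerance $\gamma^{j-1}$, so $|X - X_\ell| \le \gamma^{j-1}$, and line~5 executes only if $\gamma^j > |X_\ell - y|$. The triangle inequality gives $|X_\ell - y| \ge |X - y| - \gamma^{j-1}$, so refining to tolerance $\gamma^j$ forces $|X - y| < \gamma^j + \gamma^{j-1} = (1+\gamma)\gamma^{j-1}$. Hence $\{J \ge j\} \subseteq \{|X - y| < (1+\gamma)\gamma^{j-1}\}$, and Assumption~\ref{ass:lip} yields
\begin{equation*}
  \Pr(J \ge j) \le F\!\left(y + (1+\gamma)\gamma^{j-1}\right) - F\!\left(y - (1+\gamma)\gamma^{j-1}\right) \le 2\lip(1+\gamma)\gamma^{j-1} \lesssim \gamma^j .
\end{equation*}

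Finally I would combine the two displays. Since $\Pr(J \ge 0) = 1$ trivially (every realization is computed once at tolerance $1$ in line~2), the $j = 0$ term contributes $\gamma^{0} = 1 = \gamma^{(1-q)\cdot 0}$, and for $j \ge 1$ the bound $\Pr(J \ge j) \lesssim \gamma^j$ gives
\begin{equation*}
  \cost{Q_\ell} \lesssim \sum_{j=0}^\ell \gamma^{-qj}\gamma^j = \sum_{j=0}^\ell \gamma^{(1-q)j},
\end{equation*}
which is the claim. The main obstacle is the middle step: one must respect that the loop condition constrains $|X_\ell - y|$ and not $|X - y|$, and exploit the accuracy $\gamma^{j-1}$ of the current iterate to pass from the former to the latter before invoking the Lipschitz bound. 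The resulting prefactor $(1+\gamma)\gamma^{j-1}$ is harmless, as it only rescales the geometric rate by the constant $(1+\gamma)/\gamma$ absorbed into $\lesssim$.
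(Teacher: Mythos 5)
Your proof is correct and follows essentially the same route as the paper's: both bound the probability of reaching refinement step $j$ by the probability that the true $X$ lies within $O(\gamma^{j-1})$ of $y$ (you via the triangle inequality with the sharper radius $(1+\gamma)\gamma^{j-1}$, the paper with $2\gamma^{j-1}$), apply Assumption~\ref{ass:lip}, and sum the per-iteration costs $\gamma^{-qj}$ weighted by these probabilities. The only differences are cosmetic bookkeeping (your $\Pr(J\ge j)$ is the paper's $\Pr(E_j)$, and the handling of the cost-$1$ initial solve differs by an absorbed constant).
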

\begin{proof}
  Consider iteration $j$, i.e., when $X_\ell(\omega^i_\ell)$ has
  been computed to tolerance $\gamma^{j-1}$. We denote by $E_{j}$ the
  probability that a realization enters iteration $j$. For $j \le
  \ell$,
  \begin{equation*}\label{eq:probToEnter}
    \begin{aligned}
      \Pr(E_{j}) & = \Pr(y-\gamma^{j-1} \le X_\ell \le y + \gamma^{j-1}) \\
      & \le \Pr(y - 2\gamma^{j-1} \le X \le y + 2\gamma^{j-1}) \\
      & = F(y+2\gamma^{j-1}) - F(y-2\gamma^{j-1}) \\
      & \leq 4\lip \gamma^{j-1}.
    \end{aligned}
  \end{equation*}
  Every realization is initially solved to tolerance $1$. Using that
  the cost for solving a realization to tolerance $\gamma^j$ is
  $\gamma^{-qj}$, we get that the expected cost is
  \begin{equation*}\label{eq:costselective1}
    \cost{Q_\ell} = 1 + \sum_{j=1}^\ell \Pr(E_j) \gamma^{-qj} \le 1 + \sum_{j=1}^\ell4\lip\gamma^{j-1}\gamma^{-qj} \lesssim \sum_{j=0}^\ell\gamma^{(1-q)j}
  \end{equation*}
  which concludes the proof.
\end{proof}

\section{Multilevel Monte Carlo using the selective refinement strategy}\label{sec:SMLMC}
Combining the MLMC method with the algorithm for selective refinement
there can be further savings in computational cost. We call this
method multilevel Monte Carlo with selective refinement (MLMC-SR). In
particular, for $q>1$ we obtain from Lemma~\ref{lem:cost_selective}
that the expected cost for one sample can be bounded as
\begin{equation}
  \label{eq:costmodel_selective}
  \begin{aligned}
    \cost{Q_\ell} \lesssim \sum_{j=0}^\ell\gamma^{(1-q)j} \lesssim \gamma^{(1-q)\ell}.
  \end{aligned}
\end{equation}
Applying Theorem~\ref{thm:MLMC} with $r = q-1$ yields the
following result.

\begin{theorem}
  \label{thm:workMLMCsel}
  Let Assumption~\ref{ass:lip} and Assumption~\ref{ass:qoierror} hold
  (so that Lemma~\ref{lem:assEVW} holds) and suppose that
  Algorithm~\ref{alg:selective} is executed to compute $Q_\ell$. Then
  there exists a constant $L$ and a sequence $\{N_\ell\}$ such that the
  RMSE is less than $\epsilon$, and the expected cost for the MLMC
  estimator with selective refinement is
  \begin{equation}
    \cost{\QML{L}} \lesssim
    \begin{cases}
      \epsilon^{-2} & \quad q<2 \\
      \epsilon^{-2}(\log\epsilon^{-1})^2 & \quad q = 2 \\
      \epsilon^{-q} & \quad q>2.
    \end{cases}
  \end{equation}
\end{theorem}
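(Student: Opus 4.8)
The plan is to obtain Theorem~\ref{thm:workMLMCsel} as a direct corollary of the abstract MLMC complexity bound in Theorem~\ref{thm:MLMC}. The single point of novelty is the per-sample cost: selective refinement replaces the full-refinement cost exponent $q$ by the reduced exponent $r=q-1$. So the strategy is to check that every hypothesis of Theorem~\ref{thm:MLMC} is met by the selectively refined estimator with this smaller $r$, and then read off the three regimes.

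First I would verify the error hypotheses. By Lemma~\ref{lem:selective_satisfies_assumption}, the approximations $X_\ell$ returned by Algorithm~\ref{alg:selective} satisfy Assumption~\ref{ass:qoierror}; combined with Assumption~\ref{ass:lip} this triggers Lemma~\ref{lem:assEVW}, yielding the bias bound \textbf{M1}, $|\EV{Q_\ell-Q}|\le\ME\gamma^\ell$, and the corrector-variance bound \textbf{M2}, $\VV{Q_\ell-Q_{\ell-1}}\le\MV\gamma^\ell$ for $\ell\ge1$. These are exactly the decay rates (bias and variance both of order $\gamma^\ell$) built into Theorem~\ref{thm:MLMC}, so no fresh analysis of the bias/variance split is needed, and the existence of a level $L$ and a sequence $\{N_\ell\}$ achieving RMSE below $\epsilon$ — together with the optimal choice \eqref{eq:MLMCoptimalNl} and the resulting cost \eqref{eq:MLMCtotalcost} — is inherited directly.

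Next I would supply the cheaper cost. Lemma~\ref{lem:cost_selective} gives $\cost{Q_\ell}\lesssim\sum_{j=0}^\ell\gamma^{(1-q)j}$; for $q>1$ the exponent $1-q$ is negative, the geometric sum is dominated by its last term, and hence $\cost{Q_\ell}\lesssim\gamma^{(1-q)\ell}=\gamma^{-(q-1)\ell}$, as already recorded in \eqref{eq:costmodel_selective}. This is precisely the hypothesis $\cost{Q_\ell}\lesssim\gamma^{-r\ell}$ of Theorem~\ref{thm:MLMC} with $r=q-1$. Invoking that theorem and substituting $r=q-1$ maps its three cases $r<1$, $r=1$, $r>1$ onto $q<2$, $q=2$, $q>2$; in the final case its bound $\epsilon^{-1-r}$ becomes $\epsilon^{-1-(q-1)}=\epsilon^{-q}$, matching the claimed rates exactly.

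The one place I would be careful — and the only real obstacle — is the complementary regime $q\le1$, where the geometric dominance used in \eqref{eq:costmodel_selective} breaks down because $1-q\ge0$. There the sum $\sum_{j=0}^\ell\gamma^{(1-q)j}$ is bounded by a constant when $q<1$ and grows merely linearly in $\ell$ when $q=1$, so in either case the per-sample cost is sub-exponential; Theorem~\ref{thm:MLMC} then applies with any $r\in(0,1)$, still landing in the $r<1$ branch and delivering $\cost{\QML{L}}\lesssim\epsilon^{-2}$. Since $q\le1$ lies within the stated regime $q<2$, this is consistent with the theorem, and the full result follows once this edge case is accounted for. Apart from this bookkeeping the proof is merely the composition of Lemma~\ref{lem:assEVW}, Lemma~\ref{lem:cost_selective}, and Theorem~\ref{thm:MLMC}, with no additional estimates required.
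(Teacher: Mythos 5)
Your proposal is correct and follows essentially the same route as the paper: the paper's proof is precisely the observation that Lemma~\ref{lem:cost_selective} supplies the cost hypothesis of Theorem~\ref{thm:MLMC} with $r=q-1$ for $q>1$, while the regime $q\le 1$ is absorbed into the $\epsilon^{-2}$ case by a monotonicity-of-cost argument. Your slightly more explicit treatment of $q\le 1$ (bounding the sum directly and applying Theorem~\ref{thm:MLMC} with any $r\in(0,1)$) is just a spelled-out version of the same step.
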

\begin{proof}
  For $q>1$, follows directly from Theorem~\ref{thm:MLMC} since
  Lemma~\ref{lem:assEVW} holds with $r=q-1$. For $q \le 1$, we
  use the rate $\epsilon^{-2}$ from the case $1 < q < 2$, since the
  cost cannot be worsened by making each sample cheaper to
  compute.
\end{proof}

In a standard MC method we have $\epsilon^{-2}\sim N$ where $N$ is the
number of samples and $\epsilon^{-q}\sim \cost{Q'_L}$ where
$\cost{Q'_L}$ is the expected computational cost for solving one
realization on the finest level without selective refinement. The
MLMC-SR method then has the following cost,
\begin{equation}
  \cost{\QML{L}} \lesssim
  \begin{cases}
    N & \quad q< 2 \\
    \cost{Q'_L} & \quad q > 2.
  \end{cases}
\end{equation}
A comparison of MC, MLMC with full refinement (MLMC), and MLMC with
selective refinement (MLMC-SR), is given in
Table~\ref{tab:comparison}. To summarize, the best possible scenario
is when the cost is $\epsilon^{-2}$, which is equivalent with a
standard MC method where all samples can be obtained with cost
$1$. This complexity is obtained for the MLMC method when $q<1$ and for
the MLMC-SR method when $q<2$. For $q>2$ the MC method has the same
complexity as solving $N$ problem on the finest level $N\cost{Q'_L}$,
 MLMC has the same cost as $N^{1/2}$ problem on the finest level
$N^{1/2}\cost{Q'_L}$, and MLMC-SR method as solving one problem on the
finest level $\cost{Q'_L}$.
  \begin{table}[ht]
    \begin{center}
      \begin{tabular}{|c|c|c|c|} \hline
        Method  & $0\leq q<1$  & $1< q < 2$ & $ q>2$  \\ \hline\hline
        MC    & $\epsilon^{-2-q}$  &  $\epsilon^{-2-q}$   &  $\epsilon^{-2-q}$ \\ \hline
        MLMC  & $\epsilon^{-2}$  &  $\epsilon^{-1-q}$ &  $\epsilon^{-1-q}$ \\\hline
        MLMC-SR & $\epsilon^{-2}$  &  $\epsilon^{-2}$   &   $\epsilon^{-q}$ \\\hline
      \end{tabular}
    \end{center}
    \caption{Comparison of work between MC, MLMC with full refinement (MLMC), and MLMC with selective refinement (MLMC-SR) for different $q$.}
\label{tab:comparison}
\end{table}

\section{Heuristic algorithm}
\label{sec:heuristic}
In this section, we present a heuristic algorithm for the MLMC method
with selective refinement. Contrary to Theorem~\ref{thm:workMLMCsel},
this algorithm does not guarantee that the RMSE is
$\mathcal{O}(\epsilon)$, since we in practice lack a priori knowledge
of the constants $\ME$ and $\MV$ in Lemma~\ref{lem:assEVW}. Instead,
the RMSE needs to be estimated. Recall the split of the MSE into a
numerical and statistical contribution:
\begin{equation}
    \left(\EV{Q-\widehat Q}\right)^2\leq \frac{1}{2}\epsilon^2\quad\text{and}\quad \VV{\widehat Q} \leq \frac{1}{2}\epsilon^2.
\end{equation}
With $\widehat Q$ being the multilevel Monte Carlo estimator
$\QML{L}$, we here present heuristics for estimating the numerical and
statistical error of the estimator.

For both estimates and $\ell \ge 1$, we make use of the trinomially
distributed variable $Y_\ell(\omega) = Q_\ell(\omega) -
Q_{\ell-1}(\omega)$.  We denote the probabilities for $Y_\ell$ to be
$-1$, $0$ and $1$ by $p_{-1}$, $p_{0}$ and $p_{1}$, respectively. For
convenience, we drop the index $\ell$ for the probabilities, however,
they do depend on $\ell$. In order to estimate the numerical bias
$\EV{Q - \QML{L}} = \EV{Q - Q_L}$, we assume that ${\bf M1}$ holds
approximately with equality, i.e., $\left|\EV{Q-Q_\ell}\right| \approx
\ME\gamma^\ell$. Then the numerical bias can be overestimated, 
$|\EV{Q - Q_\ell}| \le |\EV{Y_\ell}|(\gamma^{-1}-1)^{-1}$, since
\begin{equation*}
  \begin{aligned}
    \left| \EV{Y_\ell} \right| & = \left| \EV{Q_\ell-Q} - \EV{Q_{\ell-1}-Q}  \right| \\
    & \ge \left| |\EV{Q_\ell-Q}| - |\EV{Q_{\ell-1}-Q}|  \right| \\
    & \approx \left| \ME \gamma^\ell - \ME \gamma^{\ell-1} \right| \\
    & = \ME \gamma^{\ell}(\gamma^{-1} - 1). \\
  \end{aligned}
\end{equation*}
Hence, we concentrate our effort on estimating $|\EV{Y_\ell}|$.

It has been observed that the accuracy of sample estimates of mean and
variance of $Y_\ell$ might deteriorate for deep levels $\ell \gg 1$,
and a continuation multilevel Monte Carlo method was proposed in
\cite{CHNST14} as a remedy for this. That idea could be applied and
specialized for this functional to obtain more accurate
estimates. However, in this work we use the properties of the
trinomially distributed $Y_\ell$ to construct a method with optimal
asymptotic behavior, possibly with increase of computational cost by a
constant.

We consider the three binomial distributions $[Y_\ell = 1]$,
$[Y_\ell = -1]$ and $[Y_\ell \ne 0]$ which have parameters $p_1$,
$p_{-1}$ and $p_1 + p_{-1}$, respectively ($[\cdot]$ is the Iverson
bracket notation). These parameters can be used in estimates for both
the expectation value and variance of the trinomially distributed
$Y_\ell$. Considering a general binomial distribution $B(n, p)$, we
want to estimate $p$. For our distributions, as the level $\ell$
increases, $p$ approaches zero, why we are concerned with finding
stable estimates for small $p$. It is important that the parameter is
not underestimated, since it is used to control the numerical bias and
statistical error and could then cause premature termination. We
propose an estimation method that is easy to implement, and that will
overestimate the parameter in case of accuracy problems, rather than
underestimate it, while keeping the asymptotic rates given in
Lemma~\ref{lem:assEVW} for the estimators.

The standard unbiased estimator of $p$ is $\hat p = xn^{-1}$, where
$x$ is the number of observed successes. The proposed alternative (and
biased) estimator is $\tilde p = (x + k)(n + k)^{-1}$ for a $k >
0$. This corresponds to a Bayesian estimate with prior beta
distribution with parameters $(k+1, 1)$.  Observing that
\begin{equation}
  \label{eq:ev_p}
  \begin{aligned}
    |\EV{Y_\ell}| & = |p_1 - p_{-1}|, \\
    \VV{Y_\ell} & = p_1 + p_{-1} - (p_1 - p_{-1})^2
  \end{aligned}
\end{equation}
and considering Lemma~\ref{lem:assEVW} (assuming equality with the
rates), we conclude that all three parameters $p \propto \gamma^\ell$
(where $\propto$ means asymptotically proportional to, for $\ell \gg
1$). With the standard estimator $\hat p$, the relative variance can
be expressed as $\VV{\hat p}(\EV{\hat p})^{-2}$. This quantity should
be less than one for an accurate estimate. We now examine its
asymptotic behavior.  The parameter $n$ is the optimal number of
samples at level $\ell$ (equation \eqref{eq:MLMCoptimalNl}) and can be
expressed as
\begin{equation}
  \label{eq:n_asymptotic}
  n \propto \gamma^{\frac{1}{2}\ell q - \frac{1}{2}L(2+q)},
\end{equation}
where we used that $\epsilon \propto \gamma^L$, $\cost{Y_\ell} \propto
\gamma^{(1-q)\ell}$ and $\VV{Y_\ell} \propto \gamma^\ell$.  Then we
have
\begin{equation*}
  \begin{aligned}
    \frac{\VV{\hat p}}{\EV{\hat p}^{2}} & = \frac{n^{-1}p(1-p)}{p^2} = \frac{1-p}{np}
     \propto \gamma^{\frac{2+q}{2}(L-\ell)}.
  \end{aligned}
\end{equation*}
In particular, for $\ell = L$, the relative variance is asymptotically
constant, but we don't know a priori how big this constant is. When it
is large (greater than $1$), the relative variance of $\hat p$ might
be very large. An analogous analysis on $\tilde p$ yields
\begin{equation}
  \label{eq:relative_variance}
  \begin{aligned}
    \frac{\VV{\tilde p}}{\EV{\tilde p}^{2}} & = \frac{(n+k)^{-2}np(1-p)}{(n+k)^{-2}(np + k)^2} = \frac{np(1-p)}{(np+k)^2} \le \frac{np}{(np+k)^2}. \\
  \end{aligned}
\end{equation}
Maximizing the bound in \eqref{eq:relative_variance} with respect to $np$, gives
\begin{equation*}
  \begin{aligned}
    \frac{\VV{\tilde p}}{\EV{\tilde p}^{2}} & \le \frac{1}{4k}. \\
  \end{aligned}
\end{equation*}
Choosing for instance $k = 1$ gives a maximum relative variance of
$1/4$. Choosing a larger $k$ gives larger bias, but smaller relative
variance. The bias of this estimator is significant if $np \ll k$,
however, that is the case when we have too few samples to estimate the
parameter accurately, and then $\tilde p$ instead acts as a bound. The
estimate $\tilde p$ keeps the asymptotic behavior $\EV{\tilde p}
\propto \gamma^\ell$, since
\begin{equation*}
  \begin{aligned}
    \EV{\tilde p} & = \frac{np + k}{n+k} \propto \frac{np + k}{n} = p + \frac{k}{n} \propto p,
  \end{aligned}
\end{equation*}
where we use that $n$ dominates $k$ for large $\ell$.

Now, estimating the parameters $p_1$, $p_{-1}$ and $p_{1} + p_{-1}$ as
$\tilde p_1$, $\tilde p_{-1}$ and $\tilde p_{\pm 1}$, respectively,
using the estimator $\tilde p$ above (note that the sum $p_1 + p_{-1}$
is estimated separately from $p_1$ and $p_{-1}$) we can bound
(approximately) the expected value and variance of $Y_\ell$ in \eqref{eq:ev_p}:
\begin{equation}
  \label{eq:expected_p}
  \begin{aligned}
    |\EV{Y_\ell}| & \le \max(p_1, p_{-1}) \approx \max(\tilde p_1, \tilde p_{-1})
  \end{aligned}
\end{equation}
and
\begin{equation}
  \begin{aligned}
  \label{eq:variance_p}
  \VV{Y_\ell} & \le p_1 + p_{-1} \approx \tilde p_{\pm 1}
  \end{aligned}
\end{equation}
for $\ell \ge 1$. For $\ell = 0$, the sample size is usually large
enough to use the sample mean and variance as accurate
estimates. Since the asymptotic behavior of $\tilde p$ is
$\gamma^\ell$, the rates in Lemma~\ref{lem:assEVW} still holds and
Theorem~\ref{thm:workMLMCsel} applies (however, with approximate
quantities).

The algorithm for the MLMC method using selective refinement is
presented in Algorithm~\ref{alg:MLMCselective}. The termination
criterion is the same as was used in the standard MLMC algorithm
\cite{Gil08}, i.e.,
\begin{equation}
  \label{eq:termination}
  \begin{aligned}
    \max(\gamma |\EV{Y_{L-1}}|, |\EV{Y_{L}}|) < \frac{1}{\sqrt{2}}(\gamma^{-1} - 1)\epsilon,
  \end{aligned}
\end{equation}
where $|\EV{Y_{L-1}}|$ and $|\EV{Y_{L}}|$ are estimated using the
methods presented above. A difference from the standard MLMC algorithm
is that the initial sample size for level $L$ is $N_L = N\gamma^{-L}$
instead of $N_L = N$, for some $N$. This is what is predicted by
equation \eqref{eq:n_asymptotic} and is necessary to provide accurate
estimates of the expectation value and variance of $Y_\ell$ for deep
levels. Other differences from the standard MLMC algorithm is that the
selective refinement algorithm (Algorithm~\ref{alg:selective}) is used
to compute $\QMC{L}$, and that the estimates of expectation value and
variance of $Y_\ell$ are computed according to the discussion above.

\begin{algorithm}[htb]\caption{MLMC method using selective refinement}
\label{alg:MLMCselective}
\begin{algorithmic}[1]
  \State Pick critical value $y$, cost model parameter $q$, tolerance factor $\gamma$, initial number of samples $N$, parameter $k$, and final tolerance $\epsilon$
  \State Set $L=0$
  \Loop
  \State Let $N_L = N\gamma^{-L}$ and compute $\QMC{L}$ using selective refinement (Algorithm~\ref{alg:selective})
  \State Estimate $\VV{Q_\ell-Q_{\ell-1}}$ using \eqref{eq:expected_p}
  \State Estimate the optimal $\{N_\ell\}_{\ell=0}^L$ using \eqref{eq:MLMCoptimalNl} and cost model \eqref{eq:costmodel_selective}
  \State \parbox[t]{\dimexpr\linewidth-\algorithmicindent}{Compute $\QMC{\ell}$ for all levels $\ell = 0,\ldots,L$ using selective refinement (Algorithm~\ref{alg:selective}) \strut}
  \State Estimate $\EV{Q_\ell-Q_{\ell-1}}$ using \eqref{eq:variance_p}
  \State Terminate if converged by checking inequality \eqref{eq:termination}
  \State Set L = L + 1
  \EndLoop
  \State The MLMC-SR estimator is $\QML{L} = \sum_{\ell=1}^L \QMC{\ell}$
\end{algorithmic}
\end{algorithm}

\section{Numerical experiments}
\label{sec:numerical}
Two types of numerical experiments are presented in this section. The
first experiment (in Section~\ref{sec:demonstrational}) is performed
on a simple and cheap model $\mathcal{M}$ so that the asymptotic
results of the computational cost, derived in Theorem~\ref{thm:workMLMCsel},
can be verified. The second experiment (in
Section~\ref{sec:single-phase}) is performed on a PDE model
$\mathcal{M}$ to show the method's applicability to realistic
problems. In our experiments we made use of the software FEniCS
\cite{FenicsBook} and SciPy \cite{SciPy}.

\subsection{Failure probability of a normal distribution}
\label{sec:demonstrational}
In this first demonstrational experiment, we let the quantity of
interest $X$ belong to the standard normal distribution and we seek to
find the probability of $X \le y = 0.8$. The true value of this
probability is $\Pr(X \le 0.8) = \Phi(0.8) \approx 0.78814$ and we
hence have a reliable reference solution. We define approximations
$X_h$ of $X$ as follows. First, we let our input data $\omega$ belong
to the standard normal distribution, and let $X(\omega) =
\omega$. Then, we let $X_h(\omega) = \omega +
h(2U(\omega,h)-1+b)/(1+b)$, where $b = 0.1$ and $U(\omega,h)$ is a
uniformly distributed random number between $0$ and $1$. Since we have
an error bound $|X_h - X| \le h$, the selective refinement algorithm
(Algorithm~\ref{alg:selective}) can be used to construct a function
$X_\ell$ satisfying Assumption~\ref{ass:qoierror}. With this setup it
is very cheap to compute $X_h$ to any accuracy $h$, however, for
illustrational purposes we assume a cost model $\cost{X_h} = h^{-q}$
with $q = 1$, $2$, and $3$ to cover the three cases in
Theorem~\ref{thm:workMLMCsel}.

For the three values of $q$, and eight logarithmically distributed
values of $\epsilon$ between $10^{-3}$ and $10^{-1}$, we performed
$100$ runs of Algorithm~\ref{alg:MLMCselective}. All parameters used
in the simulations are presented in Table~\ref{tab:demonstrational}.
\begin{table}[ht]
  \begin{center}
    \begin{tabular}{|c|c|c|c|} \hline
      Parameter  & Value \\ \hline\hline
      $y$ & $0.8$ \\ \hline
      $q$ & $1$, $2$, $3$ \\ \hline
      $\gamma$ & $0.5$ \\ \hline
      $N$ & $10$ \\ \hline
      $k$ & $1$ \\ \hline
      $\epsilon$ & $(10^{-3}, 10^{-1})$ \\ \hline
    \end{tabular}
  \end{center}
  \caption{Parameters used for the demonstrational experiment.}
  \label{tab:demonstrational}
\end{table}

For convenience, we denote by $\widehat Q_i$ the MLMC-SR estimator
$\QML{L}$ of the failure probability from run $i = 1,\ldots,M$ with $M
= 100$. For each tolerance $\epsilon$ and cost parameter $q$, we
estimated the RMSE of the MLMC-SR estimator by
\begin{equation*}
  \rmse{\QML{L}}
  = \left(\EV{\left(\QML{L}- \EV{Q}\right)^2}\right)^{1/2}
 \approx \left(\frac{1}{M}\sum_{i=1}^{M}\left(\widehat Q_i - \EV{Q}\right)^2\right)^{1/2}.
\end{equation*}
Also, for each of the eight tolerances $\epsilon$, we computed the
run-specific estimation errors $|\widehat Q_i - \EV{Q}|$,
$i=1,\ldots,M$. In Figure~\ref{fig:rmse} we present three plots of the
RMSE vs.\ $\epsilon$, one for each value of $q$. We can
see that the method yields solutions with the correct accuracy.
\begin{figure}[ht]
  \centering
  \begin{subfigure}[b]{0.49\textwidth}
    \includegraphics[]{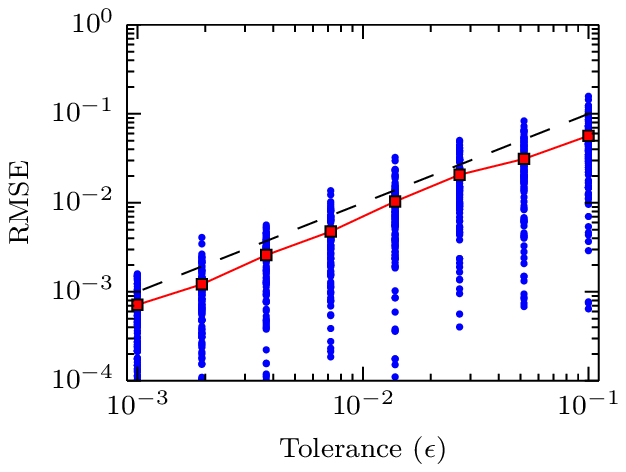}
    \caption{Case q = 1.}
  \end{subfigure}
  \begin{subfigure}[b]{0.49\textwidth}
    \includegraphics[]{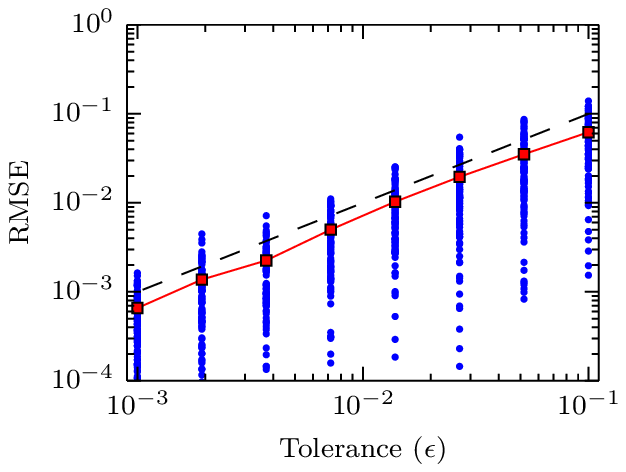}
    \caption{Case q = 2.}
  \end{subfigure}
  \begin{subfigure}[b]{0.49\textwidth}
    \includegraphics[]{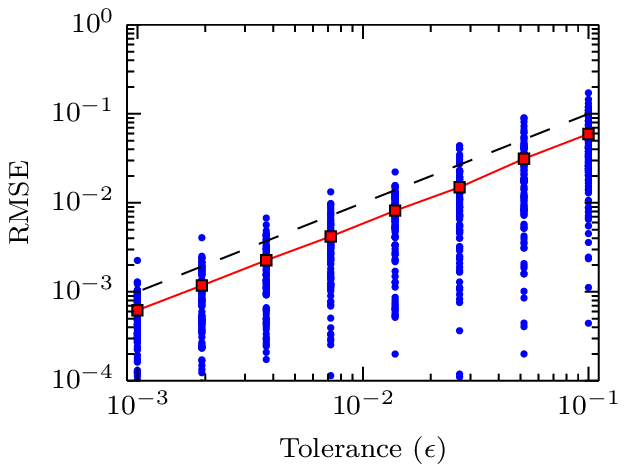}
    \caption{Case q = 3.}
  \end{subfigure}
  \begin{subfigure}[b]{0.49\textwidth}
    \includegraphics[]{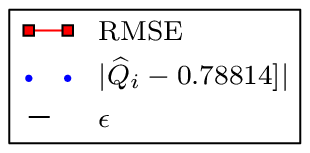}
    \caption{Legend.}
  \end{subfigure}
  \caption{RMSE (square markers and line) plotted vs.\ tolerance for
    the experiment described in Section~\ref{sec:demonstrational}. The
    dashed line is the tolerance $\epsilon$ and the dots are the
    individual errors for the $100$ runs at each
    tolerance.}\label{fig:rmse}
\end{figure}

In order to verify Theorem~\ref{thm:workMLMCsel}, we estimated the
expected cost for each tolerance $\epsilon$ and value of $q$ by
computing the mean of the total cost over the $100$ runs. The cost for
each realization was computed using the cost model in equation
\eqref{eq:costmodel}. The cost for realizations differs not only
between levels $\ell$, but also within a level $\ell$ owing to the
selective refinement algorithm. For each run $i$, the costs of all
realizations were summed to obtain the total cost for that run. We
computed a mean of the total costs for the $100$ runs. A plot of the
result can be found in Figure~\ref{fig:work}. As the tolerance
$\epsilon$ decreases the expected cost approaches the rates given in
Theorem~\ref{thm:workMLMCsel}. The reference costs are multiplied by
constants to align well with the estimated expected costs.
\begin{figure}[ht]
  \centering
    \includegraphics[]{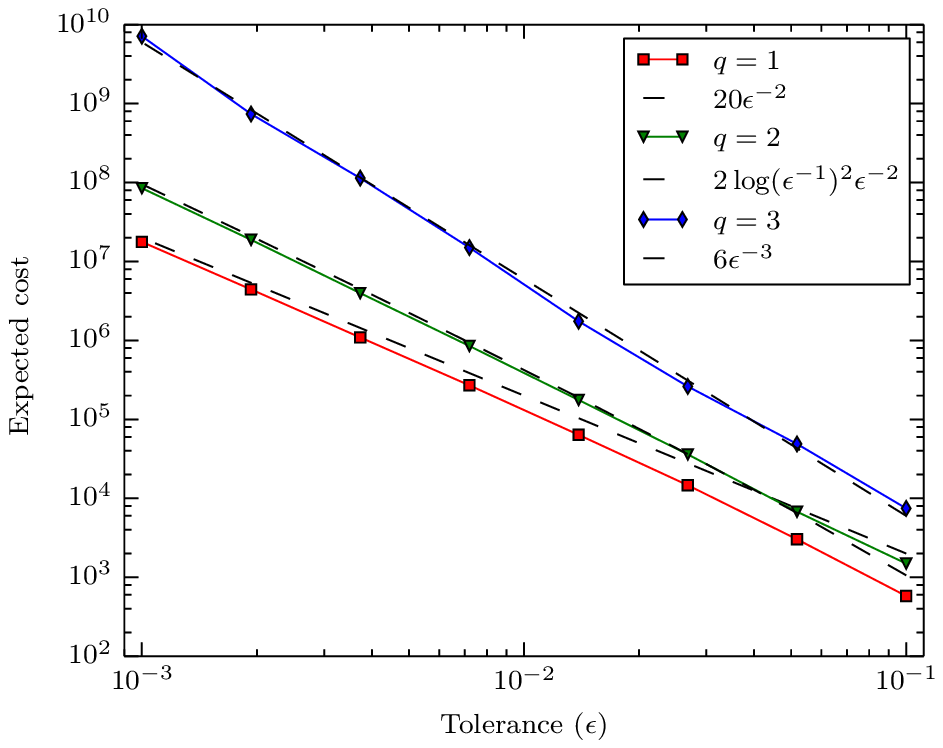}
    \caption{Computed mean total cost (diamond, triangle, square
      markers and lines) plotted with theoretical reference cost
      (dashed lines) for the experiment described in
      Section~\ref{sec:demonstrational}. The reference costs for the
      three values of $q$ are: $20\epsilon^{-2}$ for $q=1$;
      $2\log(\epsilon^{-1})^2\epsilon^{-2}$ for $q=2$; and
      $6\epsilon^{-3}$ for $q=3$.}\label{fig:work}
\end{figure}

\subsection{Single-phase flow in media with lognormal permeability}
\label{sec:single-phase}
We consider Darcy's law on a unit square $[0,1]^2$
on which we have impearmeable upper and lower boundaries, high
pressure on the left boundary ($\Gamma_1$) and low pressure on the
right boundary ($\Gamma_2$). We define the spaces $H^1_f(\domain)
= \{v \in H^1(\domain) : v|_{\Gamma_1} = f \text{ and } v|_{\Gamma_2}
= 0\}$, and let $n$ denote the unit normal of
$\domain$.

The weak form of the partial differential equation reads: find $u \in H^1_1(\domain)$ such that
\begin{equation}
  (a(\omega,\cdot) \nabla u, \nabla v) = 0\quad \text{ in } \domain,
\end{equation}
for all $v \in H_0^1(\domain)$, and $a$ is a stationary log-normal
distributed random field
\begin{equation}
  a(\omega,\cdot) = \exp(\kappa(\omega,\cdot)),
\end{equation}
over $\domain$, where $\kappa(\cdot,x)$ has zero mean and is normal
distributed with exponential covariance, i.e., for all $x_1, x_2 \in
\domain$ we have that
\begin{equation}
  \VV{\kappa(\cdot,x_1)\kappa(\cdot,x_2)} = \sigma^2\exp\left( \frac{-\|x_1-x_2\|_2}{\rho} \right).
\end{equation}
We choose $\sigma = 1$ and $\rho=0.1$ in the numerical experiment.

We are interested in the boundary flux on the right boundary, i.e.,
the functional $X(\omega) = \int_{\Gamma_2} n \cdot a(\omega)\nabla u
\dd x = (a(\omega,\cdot) \nabla u, \nabla g)$, for any $g \in
H^1(\domain)$, $g|_{\Gamma_1} = 0$ and $g|_{\Gamma_2} = 1$. The last
equality comes by a generalized Green's identity, see
\cite[Chp.~1,\ Corollary~2.1]{GiRa86}.

To generate realizations of $a(\omega,\cdot)$, the circulant embedding
method introduced in \cite{DiNe97} is employed.  The mesh resolution
for the input data of the realizations generated on level $\ell$ in
the MLMC-SR algorithm is chosen such that the finest mesh needed on
level $\ell$ is not finer than the chosen mesh. For a fixed
realization on level $\ell$ we don't know how fine data we need,
because of the selective refinement procedure. This means that the
complexity obtained for the MLMC-SR algorithm do not apply for the
generation of data. The circulant embedding method has log-linear
complexity. A remedy for the complexity of generating realizations is
to use a truncated Karhunen-Loève expansion that can easily be
refined. However, numerical experiments show that we are in a regime
where the time spent on generating realizations using circulant
embedding is negligible compared to the time spent in the linear
solvers.

The PDE is discretized using a FEM-discretization with linear Lagrange
elements. We have a family of structured nested meshes
$\mathcal{T}_{h_m}$, where a mesh $h_m$ is the maximum element
diameter of the given mesh. The data $a(\omega,\cdot)$ is defined in
the grid points of the meshes. Using the circulant embedding we get an
exact representation of the stochastic field in the grid points of the
given mesh. This can be interpreted as not making any approximation of
the stochastic field but instead making a quadrature error when
computing the bilinear form.

The functional for a discretization on mesh $m$ is defined as
$X_{h_{m}}(\omega) = (a(\omega,\cdot) \nabla u_{h_m}, \nabla g)$. The
convergence rates in energy norm for log-normal data is
$h^{1/2-\delta}$ for any $\delta > 0$ \cite{ChScTe13}. Using
postprocessing, it can be shown that the error in the functional
converges twice as fast \cite{GiSu02}, i.e,
$|X_{h_{m}}-X_{h_{m}}(\omega)|\leq Ch^{s-2\delta}$ for $s=1$. We use a
multigrid solver that has linear $\alpha=1$ (up to $\log$-factors)
complexity. The work for one sample can then be computed as
$\gamma^{-q\ell}$ where $\gamma^{\ell}$ is the numerical bias
tolerance for the sample and $q\approx 2\alpha/s = 2$, which was also
verified numerically. The error is estimated using the dual solution
computed on a finer mesh. Since it can be quite expensive to solve a
dual problem for each realization of the data, the error in the
functional can also be computed by estimating the constant $C$ and $s$
either numerically or theoretically.

We choose $\gamma=0.5$, $N=10$, and $k=1$ in the the MLMC-SR
algorithm, see Section~\ref{sec:heuristic} for more
information on the choices of parameters.  The problem
reads: find the probability $p$ for $X\leq y=1.5$ to the given RMSE
$\epsilon$. We compute $p$ for
$\epsilon=10^{-1}$, $10^{-1.5}$, and $10^{-2}$. All parameters used in the
simulation are presented in Table~\ref{tab:lognomal}.
\begin{table}[tb]
  \begin{center}
    \begin{tabular}{|c|c|} \hline
      Parameter  & Value \\ \hline\hline
      $y$ & $1.5$ \\ \hline
      $q$ & $2$ \\ \hline
      $\gamma$ & $0.5$ \\ \hline
      $N$ & $10$ \\ \hline
      $k$ & $1$ \\ \hline
      $\epsilon$ & $10^{-1},10^{-1.5},10^{-2}$ \\ \hline
      $\rho$ & $ 0.1$ \\ \hline
      $\sigma$ & $ 1$ \\ \hline
    \end{tabular}
  \end{center}
  \caption{Parameters used for the single-phase flow experiment. The parameters $y,q,\gamma,N,k,\epsilon$ are used in the MLMC-SR algorithm and $\rho$, $\sigma$ to define the log-normal field.}
  \label{tab:lognomal}
\end{table}
To verify the accuracy of the estimator we compute $100$ simulations
of the MLMC-SR estimator for each RMSE $\epsilon$ and present the
sample standard deviation (square root of the sample variance) of the
MLMC-SR estimators in Table~\ref{tab:lognormal-result}.
\begin{table}[tb]
  \begin{center}
    \begin{tabular}{|c|c|c|c|} \hline
      $\epsilon$ & Mean $p$  & Sample std           & Target std ($\epsilon/\sqrt{2}$)  \\ \hline\hline
      $10^{-1}$   & $0.8834$  & $6.472\cdot10^{-2}$   & $7.071\cdot10^{-2}$   \\ \hline
      $10^{-1.5}$ &  $0.8890$ & $1.873\cdot10^{-2}$   & $2.236\cdot10^{-2}$   \\ \hline
      $10^{-2}$   & $0.8933$  & $5.557\cdot10^{-2}$   & $7.071\cdot10^{-3} $  \\ \hline
    \end{tabular}
  \end{center}
  \caption{The mean failure probability $p$ and sample standard deviation (std) is computed using 100 MLMC-SR estimators and compared to the target std which is the statistical part of the RMSE error $\epsilon$.}
  \label{tab:lognormal-result}
\end{table}
We see that in all the three cases the sample standard deviation is
smaller than the statistical contribution $\epsilon/\sqrt{2}$ of the
RMSE $\epsilon$. Since the exact flux is unknown, the numerical
contribution in the estimator has to be approximated to be less than
$\epsilon/\sqrt{2}$ as well, which is done in the termination
criterion of the MLMC-SR algorithm so it is not presented here. The
mean number of samples computed to the different tolerances on each
level of the MLMC-SR algorithm is computed from 100 simulations of the
MLMC-SR estimator for $\epsilon = 10^{-2}$ and are shown in
Table~\ref{tab:work}.
\begin{table}[tb]
  \begin{center}
    \begin{tabular}{|c|c|c|c|c|c|} \hline
      $\ell$  & $0$  & $1$ & $2$ & $3$ & $4$ \\ \hline\hline
      Mean $N_\ell$ & $16526.81$ & $9045.41$ & $4524.83$ & $1471.63$ & $738.63$ \\ \hline\hline
      $j = 0$ & $16526.81$ &    $4520.99$ &  $2265.23$ &   $734.21$  &  $366.9$  \\ \hline
      $j = 1$ &            &    $4524.42$ &  $1486.62$ &   $484.11$  &  $244.69$ \\ \hline
      $j = 2$ &            &              &  $772.98$  &   $232.33$  &  $116.77$ \\ \hline
      $j = 3$ &            &              &            &   $20.98$   &    $9.76$ \\ \hline
      $j = 4$ &            &              &            &             &    $0.51$ \\ \hline
    \end{tabular}
  \end{center}
  \caption{The distribution of realizations solved to different tolerance levels $j$ for the case $\epsilon = 10^{-2}$. The table is based on the mean of $100$ runs.}
  \label{tab:work}
\end{table}
The table shows that the selective refinement algorithm only refines a
fraction of all problems to the highest accuracy level $j =
\ell$. Using a MLMC method (without selective refinement) $N_\ell$
problem would be solved to the highest accuracy level. Using the cost
model $\gamma^{-q\ell}$ for $\epsilon = 10^{-2}$ we gain a factor
$\sim 6$ in computational cost for this particular problem using
MLMC-SR compared to MLMC. From Theorem~\ref{thm:workMLMCsel} the
computational cost for MLMC-SR and MLMC increase as
$\epsilon^{-2}\log(\epsilon^{-1})^2$ and $\epsilon^{-3}$, respectively.

\appendix

\section{Derivation of optimal level sample size}\label{optimalNl}
To determine the optimal sample level size $N_\ell$ in equation
\eqref{eq:MLMCoptimalNl}, we minimize the total cost keeping the
variance of the MLMC estimator equal to $\epsilon^2/2$, i.e.,
\begin{equation}
  \begin{aligned}
    \text{min} & \sum_{\ell=0}^L N_\ell c_\ell \\
    \text{subject to} & \sum_{\ell=0}^LN_\ell^{-1}\VV{Y_\ell} = \epsilon^2/2,
  \end{aligned}
\end{equation}
where $Y_\ell = Q_\ell - Q_{\ell-1}$. We reformulate the problem using a Lagrangian multiplier $\mu$ for the
constraint. Define the objective function
\begin{equation}
  g(N_\ell,\mu) = \sum_{\ell=0}^L N_\ell c_\ell + \mu\left(\sum_{\ell=0}^LN_\ell^{-1}\VV{Y_\ell} - \epsilon^2/2\right).
\end{equation}
The solution is a stationary point $(N_\ell,\mu)$ such that
$\nabla_{N_\ell,\mu}g(N_\ell,\mu)=0$. Denoting by $\hat N_\ell$ and
$\hat \mu$ the components of the gradient, we obtain
\begin{equation}
  \nabla_{N_\ell,\mu}g(N_\ell,\mu)=\left(c_\ell - \mu N_\ell^{-2}\VV{Y_\ell}\right)\hat N_\ell + \left(\sum_{\ell=0}^LN_\ell^{-1}\VV{Y_\ell} - \epsilon^2/2\right)\hat \mu.
\end{equation}
Choosing $N_\ell= \sqrt{\mu\VV{ Y_\ell}/ c_\ell}$ makes the $\hat
N_\ell$ components zero. The $\hat \mu$ component is zero when
$\sum_{\ell=0}^LN_\ell^{-1}\VV{Y_\ell}= \epsilon^2/2$. Plugging in
$N_\ell$ yields $
2\epsilon^{-2}\sum_{\ell=0}^L\sqrt{\VV{Y_\ell}c_\ell}= \sqrt{\mu}$ and
hence the optimal sample size is
\begin{equation}
  N_\ell = 2\epsilon^{-2}\sqrt{\VV{ Y_\ell}/ c_\ell}\sum_{k=0}^L\sqrt{\VV{Y_k}c_k}.
\end{equation}

 \bibliographystyle{plain}
 \bibliography{references}

\end{document}